\let\color@begingroup\relax
   \let\color@endgroup\relax}{}%
\def\fix@ieeecolor@hbox#1{%
  \hbox{\color@begingroup#1\color@endgroup}}
\patchcmd\@makecaption{\hbox}{\fix@ieeecolor@hbox}{}{\FAILED}
\patchcmd\@makecaption{\hbox}{\fix@ieeecolor@hbox}{}{\FAILED}
\def\BibTeX{{\rm B\kern-.05em{\sc i\kern-.025em b}\kern-.08em
    T\kern-.1667em\lower.7ex\hbox{E}\kern-.125emX}}
\newcommand{\NR}{Newton-Raphson\xspace}
\newcommand{\NF}{\mathcal{N}\mathcal{F}}
\theoremstyle{remark}
\newtheorem{remark}{Remark}
\newtheorem{definition}{Definition}
\newtheorem{proposition}{Proposition}
\newtheorem{lemma}{Lemma}
\newtheorem{theorem}{Theorem}
\newtheorem{corollary}{Corollary}
\newcommand{\edit}[1]{\textcolor{black}{#1}}
\begin{document}

\title{Hierarchical Network Partitioning for Solution \\of Potential-Driven, Steady-State Nonlinear \\ Network Flow Equations}

\author{Shriram Srinivasan$^{\dagger}$, Kaarthik Sundar$^{*}$
\thanks{$^{\dagger}$Applied Mathematics and Plasma Physics Group, Los Alamos National
Laboratory, Los Alamos, New Mexico, USA. E-mail: \texttt{shrirams@lanl.gov}}\;
\thanks{$^{*}$Information Systems and Modeling Group, Los Alamos National
Laboratory, Los Alamos, New Mexico, USA. E-mail: \texttt{{kaarthik}@lanl.gov}}\;
\thanks{The authors acknowledge the funding provided by LANL’s Directed Research and Development (LDRD) project. The research work conducted at Los Alamos National Laboratory is done under the auspices of the National Nuclear Security Administration of the U.S. Department of Energy under Contract No. 89233218CNA000001.}\; 
}

\maketitle

\begin{abstract}
The solution of potential-driven steady-state flow in large networks is a task which manifests in various engineering applications, such as transport of natural gas or water through pipeline networks.
The resultant system of nonlinear equations depends on the network topology and in general there is no numerical algorithm that offers  guaranteed convergence to the solution (assuming a solution exists). 
Some methods offer guarantees in cases where the network topology satisfies certain assumptions, but these methods fail for larger networks.
On the other hand, the Newton-Raphson algorithm offers a convergence guarantee if the starting point lies close to the (unknown) solution. 
It would be advantageous to compute the solution of the large nonlinear system through the solution of smaller nonlinear sub-systems wherein the solution algorithms (Newton-Raphson or otherwise) are more likely to succeed. 
This article proposes and describes such a procedure, an hierarchical network partitioning algorithm that enables the solution of large nonlinear systems corresponding to potential-driven steady-state network flow  equations.
\end{abstract}

\begin{IEEEkeywords}
steady-state, network flow equations, potential, \NR, partition, hierarchical
\end{IEEEkeywords}
\IEEEpeerreviewmaketitle

\def\d{\partial}

\section{Introduction}
\label{sec:intro}

Potential-driven steady-state flow in networks is an abstract problem with  manifestations in various engineering applications, such as those of natural gas \cite{singh2019optimal}, water \cite{Singh2020-water}, electric power transmission \cite{purchala2005usefulness} or in fractured porous media modelled as discrete fracture networks (DFNs) \cite{ss_flux_thresholding2018}. The task in this problem is to find nodal potentials and  flows through different components representing the network edges given the network topology, nodal injections and other data about  the components that make up the network.
In general, while the problem is simple when restricted to a single edge of a network, it ceases to be so when  a large network  is considered especially if one considers a nonlinear relationship between the nodal potentials and the flow on the edge connecting them.
The resultant system of nonlinear equations depends on the network topology and in general there is no numerical algorithm that offers  guaranteed convergence to the solution (assuming a solution exists).

The Newton-Raphson (NR) algorithm is the most popular method for solving systems of non-linear equations, but convergence is guaranteed only if the initial guess is within a certain basin of attraction centered at the solution \cite{Ciarlet2012Jul}. In other words, the NR method is sensitive to the starting point and needs a good initial guess. As the system size and number of dimensions increase, it becomes challenging to design a good starting point, especially since the basin of attraction   can show fractal behaviour \cite{kovacs2011understanding}.
In \cite{nrsolver}, the authors alleviated the situation by scaling the variables in the network flow equations so that all the transformed variables had the same order of magnitude, and thus the method performed well in numerical experiments with small to moderate network sizes. However, scaling offers no convergence guarantees and does not change the essential feature of the algorithm so that the search for a solution continues to get more challenging for larger networks. Moreover, it is not clear how to determine an appropriate initialization when parameters move outside typical operating ranges as may  occur in contingency analysis.

In order to circumvent these issues, various approaches distinct from NR have been tried that can guarantee convergence to a solution from any starting point, but they have limited applicability due to restrictive hypothesis or inherent scaling issues that render them useless for large networks. 
In the context of natural gas as well as water distribution networks, there have been attempts to use Mixed-Integer Quadratically Constrained Quadratic Program (MIQCQP) \cite{Singh2020,Singh2020-water}, Mixed-Integer Second Order Cone Program (MISOCP) \cite{Singh2019}, and Semi-Definite Program (SDP) approaches \cite{ojha2017solving} which are all computationally expensive and intractable for large instances.
\edit{The MIQCQP approach \cite{Singh2020,Singh2020-water}, for example, is an NP-hard problem which cannot scale well for large instances despite restrictions such as no overlapping cycles and no circulatory flows in cycles.}
Similarly, \cite{de2000gas} proposes a primal-dual based method to solve the network flow problem, but only for  networks without compressors while \cite{Dvijotham2015Jun} aims to exploit monotone operator theory to find a solution but requires an expensive calculation  to find pre-multiplication matrices that can render the operator monotone.

It would be advantageous if the solution of the large nonlinear system could be computed through  the solution of  smaller nonlinear sub-systems  wherein the NR algorithm or any of the aforementioned methods are more likely to succeed. 
\edit{With similar motivation,  in \cite{Rios-Mercado2002Nov}, the authors split the network by removing compressors and create hyper-nodes, but then make unsubstantiated and non-trivial assumptions on the topology of the hyper-nodes.}

In this article, we present a rigorous technique that relies on minimal assumptions about the network structure to partition the network and solve the full nonlinear system by hierarchical traversal through the partition, solving smaller nonlinear sub-systems in each instance.
Moreover, the algorithm is presented for an abstract network flow problem so that it will be immediately clear that it is applicable for a wide array of applications that includes natural gas and water distribution as special cases.

\section{Governing equations}
\label{sec:model-equations}

In order to describe the potential-driven steady-state flow equations on a network in general terms, we first introduce some notation. 
Let $G$ denote the graph of the network, where $V(G)$ and $E(G)$ are the corresponding set of \emph{junctions/nodes} and \emph{edges} of the network. 
For each junction $i \in V(G)$, we let $\pi_i \in \mathbb{R}$ denote the nodal value of the potential while $q_i \in \mathbb{R}$ is the nodal injection or supply.  
We let $f_{ij} \in \mathbb{R}$ denote the steady flow through the edge  $(i, j) \in E(G)$  that connects junctions $i, j \in V(G)$.

The system of network flow equations  consisting of balance of flows at each junction and  potential-driven flow on each edge  has the general form ($*$ superscript indicates given data)
\begin{subnumcases} {\label{eq:NF}
\mathcal N \mathcal F:}  
\gamma^{*}_{ij}\pi_i - \pi_j = g^{*}_{ij}(f_{ij}) \quad \forall (i, j) \in E(G) , &  \label{eq:NF-edge}\\ 
\sum_{(i, j) \in E(G)} f_{ij}  - \sum_{(j, i) \in E(G)} f_{ji}  = q^{*}_i \; \forall i \in V(G). & \label{eq:NF-balance}
\end{subnumcases}
In this general form, $g^{*}_{ij}: \mathbb{R} \mapsto \mathbb{R}$ is a  monotonic function, $q^{*}_i \in \mathbb{R}$ while  $\gamma^{*}_{ij} \in \mathbb{R}^{+}$.
Various choices and specification of $g^{*}_{ij}, \gamma^{*}_{ij}$ correspond to different types of active or passive edge elements in network flow applications.

There is an inherent non-uniqueness in the nodal potentials that appear in \eqref{eq:NF-edge}, since the equation could be invariant to certain additive perturbations of the nodal potentials. 
Specifically, the set of potentials satisfying \eqref{eq:NF-edge}  is unique if and only if the network $G$ has at least one cycle for which some edge $(k,l) \in E(G)$ in the cycle  has $\gamma^{*}_{kl} \neq 1$.
\edit{In order to eliminate the non-uniqueness for all network topologies, it is sufficient (but not always necessary) that the potential be known or fixed  at certain junctions.} 
Hence each junction $i \in V(G)$  is assigned to one of two mutually disjoint subsets of $V(G)$ consisting of so-called slack junctions $N_s$ and non-slack junctions $N_{ns}$ respectively, depending on whether the potential $\pi_i^*$ or injection $q_i^*$ at the node is specified. 
Thus $V = N_s \cup N_{ns}$ and one may modify the system $\NF$ \eqref{eq:NF} to:
\begin{subnumcases} {\label{eq:NF-pi}
\mathcal N \mathcal F_{\pi}:}  
\gamma^{*}_{ij}\pi_i - \pi_j = g^{*}_{ij}(f_{ij}) \quad \forall (i, j) \in E(G), &  \label{eq:NF-pi-edge}\\ 
\sum_{(i, j) \in E(G)} f_{ij}  - \sum_{(j, i) \in E(G)} f_{ji}  = q^{*}_i \; \forall i \in N_{ns}, & \label{eq:NF-pi-balance}\\
\pi_i = \pi^{*}_i \; \forall i \in N_s. & \label{eq:NF-pi-slack}
\end{subnumcases}

The non-uniqueness of the potentials is no longer an issue due to \eqref{eq:NF-pi-slack} and we are now in a position to relate the system $\NF_{\pi}$ \eqref{eq:NF-pi}  in $|V(G)| + |E(G)|$ variables to particular instances of network flow relevant to various domains.

In a single-phase electric transmission network, utilizing a Direct Current (DC) approximation for the physics of electric power flow \cite{purchala2005usefulness}, the nodal quantities are \emph{voltage phase angles} that drive \emph{active power flow}, and the edge elements are transmission lines where $\gamma^{*}_{ij} =1$ and the resistance function $g^{*}_{ij}(f_{ij}) \propto f_{ij}$, i.e., linear in $f_{ij}$. 
Thus, the edge relation \eqref{eq:NF-pi-edge} corresponds to DC power flow, \eqref{eq:NF-pi-balance} to balance of power at the nodes, and \eqref{eq:NF-pi-slack} to junctions with specified voltage phase angles (also called ``reference'' junctions in the power grid \cite{purchala2005usefulness}).
Flow in DFNs offers an  identical analogue \cite{Karra2018Mar} if we term the nodal quantities as \emph{pressures} governing \emph{fluid flows}, \eqref{eq:NF-pi-balance} as balance of mass and the edge relation \eqref{eq:NF-pi-edge} as Darcy's law for porous media.
\edit{In both of these cases, the system is usually presented in an equivalent form obtained by exploiting the linearity of $g^{*}_{ij}$ to eliminate the edge flows whereupon the Laplacian matrix of the network is seen to define a \emph{linear} system in the nodal quantities \cite{Karra2018Mar} that poses minimal challenge to numerical solution techniques.}

The flow of fluid in pipeline networks corresponds to the case where edge elements can be modelled as pipes by setting  $\gamma^{*}_{ij} =1$ and the resistance function $g^{*}_{ij}(f_{ij}) \propto f_{ij} |f_{ij}|$ \cite{nrsolver} is nonlinear. 
For compressible fluids such as natural gas or hydrogen \cite{kazi2024modeling}, the choice $g^{*}_{ij} = 0$ corresponds to pressure regulators or compressors depending  on whether $\gamma^{*}_{ij}$ is  less than or equal to unity \cite{Schmidt2017}.
For incompressible liquids such as water, pumps can be modelled as edges where 
$\gamma^{*}_{ij} =1$ and $g^{*}_{ij}$ is some positive constant or a monotonic function of $f_{ij}$ \cite{singh2019optimal}. 

For any \emph{nonlinear} edge relation, including the aforementioned case of fluid flow in pipeline networks,  \eqref{eq:NF-pi} is a nonlinear system of algebraic equations in $|V(G)| + |E(G)|$ variables that in general becomes more challenging to solve for larger networks.
For the specific case of fluid flow in pipeline networks, the existence of a solution   has been shown in \cite{ss-soln-existence} using topological degree theory. 
We shall make no attempt to follow the same line of argument to examine existence of a solution to \eqref{eq:NF-pi}, but we will in passing state  a result on uniqueness of solution to \eqref{eq:NF-pi}  when certain assumptions on the network topology are valid.  
These assumptions are not restrictive, serving instead to eliminate certain impractical but pathological cases, and are stated as follows \cite{nrsolver}:
\begin{enumerate}[label=(A\arabic*)]
    \item There is at least one slack junction, i.e., $|N_s| \geqslant 1$.\label{assumption:slacks}
    \item When  $|N_s| \geqslant 2$, a path connecting two slack junctions must consist of at least one edge with $g^{*}_{ij} \neq 0$ \label{assumption:path-pipe}
    \item Any cycle must consist of least one edge with $g^{*}_{ij} \neq 0$. \label{assumption:cycle-pipe}
\end{enumerate}
\ref{assumption:slacks} ensures that the equation \eqref{eq:NF-pi} is not invariant to additive perturbations of the potential. 
\ref{assumption:path-pipe} and \ref{assumption:cycle-pipe} are required (see \cite{nrsolver, Singh2019, Singh2020} for details) in order to ensure uniqueness of the flows satisfying \eqref{eq:NF-pi}.

\begin{proposition}
\label{prop:uniqueness}
    Under the condition that \ref{assumption:slacks}, \ref{assumption:path-pipe}, and \ref{assumption:cycle-pipe} hold, if the system \eqref{eq:NF-pi} has a solution, it must be unique. 
\end{proposition}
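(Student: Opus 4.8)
The plan is to suppose $(\pi,f)$ and $(\hat\pi,\hat f)$ both solve \eqref{eq:NF-pi}, put $\delta\pi:=\pi-\hat\pi$ and $\delta f:=f-\hat f$, and show $\delta f\equiv 0$ and then $\delta\pi\equiv 0$. Subtracting \eqref{eq:NF-pi-balance} for the two solutions shows $\delta f$ is conservative (zero net outflow) at every non-slack node, and subtracting \eqref{eq:NF-pi-slack} gives $\delta\pi_i=0$ for $i\in N_s$. I would establish uniqueness of the flows first -- this is where \ref{assumption:path-pipe}, \ref{assumption:cycle-pipe} and the monotonicity of the $g^{*}_{ij}$ do the work -- and then deduce uniqueness of the potentials cheaply, which is where \ref{assumption:slacks} enters.

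For the first stage I would split $E(G)=E_d\cup E_0$, where $E_0:=\{(i,j):g^{*}_{ij}\equiv 0\}$ collects the ``active'' edges (compressors, regulators) and $E_d$ the rest, taken to have strictly monotone $g^{*}_{ij}$ (a constant-head pump, with $g^{*}_{ij}$ a nonzero constant, may be folded into $E_0$ since its potential drop is fixed too). Multiplying \eqref{eq:NF-pi-edge}, differenced over the two solutions, by $\delta f_{ij}$ and summing yields the Tellegen-type identity
\[
\sum_{(i,j)\in E(G)}\delta f_{ij}\bigl(\gamma^{*}_{ij}\delta\pi_i-\delta\pi_j\bigr)=\sum_{(i,j)\in E(G)}\delta f_{ij}\bigl(g^{*}_{ij}(f_{ij})-g^{*}_{ij}(\hat f_{ij})\bigr).
\]
By monotonicity the right-hand side is a sum of nonnegative terms, strictly positive on any $E_d$-edge with $\delta f_{ij}\neq 0$; reorganizing the left-hand side by nodes and invoking conservation of $\delta f$ at non-slack nodes together with $\delta\pi|_{N_s}=0$ makes it telescope to $0$ when $\gamma^{*}_{ij}\equiv 1$. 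Both sides therefore vanish and $\delta f_{ij}=0$ on every $E_d$-edge. By \ref{assumption:cycle-pipe} the subgraph $(V(G),E_0)$ is a forest, by \ref{assumption:path-pipe} each of its trees contains at most one slack junction, and $\delta f$ restricted to $E_0$ is still conservative at every non-slack node; peeling the non-slack leaves of each tree one at a time then forces $\delta f\equiv 0$. (Equivalently, for $\gamma^{*}\equiv 1$ the flows solving \eqref{eq:NF-pi} are the minimizers of the convex content program $\min\sum_{(i,j)}\int_{0}^{f_{ij}}g^{*}_{ij}(s)\,ds$ over the affine set cut out by \eqref{eq:NF-pi-balance}, strictly convex in the $E_d$-coordinates, and the forest structure pins the $E_0$-coordinates.)

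For the second stage, once $\delta f\equiv 0$ the edge relation \eqref{eq:NF-pi-edge} reduces on every edge to the homogeneous identity $\gamma^{*}_{ij}\delta\pi_i=\delta\pi_j$, together with $\delta\pi_i=0$ for $i\in N_s$. By \ref{assumption:slacks} a slack junction exists, and since $\gamma^{*}_{ij}>0$ the identity forces $\delta\pi_i=0\iff\delta\pi_j=0$ across each edge, so the value $0$ propagates from a slack junction throughout the (connected) network and $\delta\pi\equiv 0$; this completes the proof.

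The real obstacle is the gain factors $\gamma^{*}_{ij}\neq 1$: then the left-hand side of the Tellegen identity need not vanish on its own, so I would instead first eliminate the $E_0$ edge relations $\gamma^{*}_{ij}\pi_i=\pi_j$ -- within each tree of the forest $(V(G),E_0)$ all potentials are fixed positive multiples of one reference value, pinned if the tree meets a slack and free otherwise -- turning \eqref{eq:NF-pi} into a system in the free references and the edge flows. A dimension count shows this reduced system is square (one reference per slack-free tree plus the $E_0$-edge flows, against the balance equations at non-slack junctions), and the remaining work is to promote ``square'' to ``uniquely solvable'' using the monotonicity of $g^{*}$ on $E_d$, e.g.\ by a monotone-operator or topological-degree argument in the spirit of the existence proof of \cite{ss-soln-existence}; this is the step I expect to require genuine care. (For disconnected $G$ one argues componentwise, using the discussion preceding \ref{assumption:slacks}.)
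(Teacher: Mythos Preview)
The paper's own proof is a one-sentence pointer to \cite{nrsolver}, so there is no concrete argument to compare yours against; what the paper invokes is precisely the pair of ingredients you use --- monotonicity of each $g^{*}_{ij}$ together with \ref{assumption:slacks}--\ref{assumption:cycle-pipe}. Your Tellegen-type identity followed by leaf-peeling on the $E_0$-forest is the standard route and is correct when $\gamma^{*}_{ij}\equiv 1$, and your second stage (propagating $\delta\pi=0$ from a slack node across edges via $\gamma^{*}_{ij}\delta\pi_i=\delta\pi_j$) is fine in full generality.

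You are right that $\gamma^{*}_{ij}\neq 1$ is the genuine obstacle: reorganising the left-hand side by nodes leaves the extra term $\sum_{(i,j)}(\gamma^{*}_{ij}-1)\,\delta\pi_i\,\delta f_{ij}$, which need not vanish, and your proposed fix --- collapse each $E_0$-tree to a single reference potential and invoke a monotone-operator or degree argument on the reduced square system --- is a plausible programme but, as you concede, not yet a proof. One small wrinkle worth flagging: if you fold constant-head pumps (nonzero constant $g^{*}_{ij}$) into $E_0$, then \ref{assumption:path-pipe} and \ref{assumption:cycle-pipe} as stated (``$g^{*}_{ij}\neq 0$'') no longer guarantee that the enlarged $E_0$ is a forest with at most one slack per component, so the peeling step would require the hypotheses strengthened to ``strictly monotone $g^{*}_{ij}$'' on at least one edge of every cycle and every slack-to-slack path. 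In the gas-pipeline setting the paper has primarily in mind, every edge has either strictly monotone $g^{*}_{ij}$ or $g^{*}_{ij}\equiv 0$, so this wrinkle does not arise and your argument for $\gamma^{*}\equiv 1$ on $E_d$ goes through.
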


\begin{proof}
    The proof follows from the same arguments that underpin  \cite{nrsolver}, namely, that the edge relation $g^{*}_{ij}$ is a monotonic function of $f_{ij}$ and that that \ref{assumption:slacks}, \ref{assumption:path-pipe}, and \ref{assumption:cycle-pipe} hold.
\end{proof}

\section{Method outline}
\label{sec:method-outline}

In this article,  we propose a partitioning algorithm that allows us to compute the solution to  \eqref{eq:NF-pi} for a large network  via the solution of \eqref{eq:NF-pi} on the partitioned sub-networks when the size of the full system presents a challenge to  the standard solution algorithm.
\emph{At all times, we shall assume that a solution exists.}
In order to motivate the subsequent mathematical description of the theoretical underpinnings of the algorithm, we first present  an intuitive, schematic description of it in Figures~\ref{fig:algo-pic}~and~\ref{fig:chunks}.

The context for the algorithm is the solution of \eqref{eq:NF-pi} specialized for flow of natural gas with the topology implied by GasLib-40 \cite{Schmidt2017} whose graph is depicted in Figure~\ref{fig:original-network} with the slack node coloured green.
From the visual depiction of the network, one can recognize that the node coloured orange is centrally located at the intersection of  several  branches.

If we imagine removing the orange coloured node, we see that it produces four connected sub-networks. Hence we replicate the node in each connected sub-network to produce the modified network in Figure~\ref{fig:duplicated-vertices}. 
A replicated node has the same potential as the original node.

In the modified network, the shaded areas in Figure~\ref{fig:chunks} delineate sub-networks, which if viewed as vertices, form a  tree with one slack node.

This tree structure then makes the subsequent steps clear: \\
(1) Solve a flow problem on the tree with injections given by the sum of all the injections of nodes that belong to the sub-network vertex. The solution then  determines the transfer or injection into the slack network $N_1$. \\
(2) Now solve \eqref{eq:NF-pi} for $N_1$. \\
(3) The computed potential for the replicated node in $N_1$ is then transmitted to the other replicated nodes in $N_2, N_3,N_4$. \\
(4) Now solve \eqref{eq:NF-pi} for $N_2, N_3, N_4$. \\
(5) The potentials and flows in vertices and   edges of Figure~\ref{fig:duplicated-vertices} constitute our solution to \eqref{eq:NF-pi} on the network depicted in Figure~\ref{fig:original-network}.

Our explanation of this procedure raises several questions: \\
(Q1) How can we identify vertices for replication without the benefit of a visual representation?
(Q2) Can we choose multiple vertices for replication? \\
(Q3) Will the reduced-graph (where sub-networks play the role of vertices) always be a tree? \\
(Q4) Would it always be possible to start from one sub-network (like $N_1$ here) and obtain the solution to system~\eqref{eq:NF-pi} for the full network? 

All of the above will be answered in the affirmative  with the mathematical development of the theory described in the next section. 
However, anticipating the developments, we can state that \emph{cut-points} or \emph{articulation points} of a network are the key concepts that ensure the \emph{block-cut tree} representation.

\begin{figure}[htb]
\centering
\subfloat[]
{\includegraphics[scale=0.9]{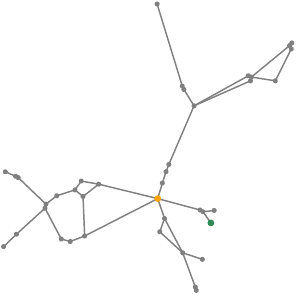}\label{fig:original-network}}\\
\subfloat[]
{\includegraphics[scale=0.9]{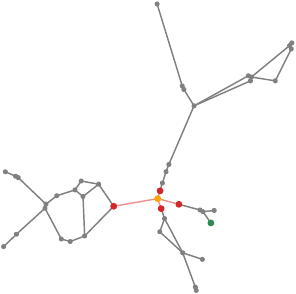}\label{fig:duplicated-vertices}}
\caption{The slack vertex is coloured green while the replicated vertices are shown in red. The chosen vertex (coloured orange) of the original network in (a) is replicated as shown to create a new, equivalent network in (b).}
\label{fig:algo-pic}
\end{figure}

\begin{figure}
    \centering
    \includegraphics[scale=.9]{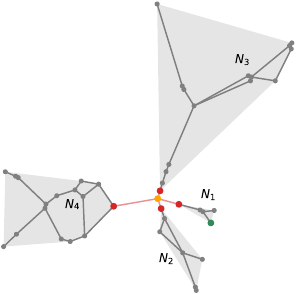} ~~
    \includegraphics[fbox, scale=0.9]{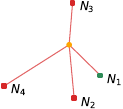}
\caption{We can visually discern  four sub-networks (blocks shown in blue colour) connected to each other (through the articulation point) in the form of a tree (shown inset, which is the block-cut tree).  Using the cumulative injections, can determine flow in each edge of the tree. If we then solve the system \eqref{eq:NF-pi} for slack sub-network $N_1$, can subsequently solve $N_2$, $N_3$ and $N_4$ to determine solution of system \eqref{eq:NF-pi} for the  full network.}
 \label{fig:chunks}
\end{figure}

\section{Mathematical description of the algorithm} \label{sec:background}

\subsection{Block-Cut Trees}
Let $G$ be the graph of the network where the edge set  and vertex set are denoted by $E(G)$ and $V(G)$ respectively. We denote the number of vertices and edges by $|V(G)|$ and $|E(G)|$ respectively.
We recall the following  standard definitions (\cite{harary-book,harary}):

\begin{definition}
\label{def:articulation-pt}
A \emph{cut-point} or \emph{articulation point} of a connected graph is a vertex whose removal increases the number of connected components.
\end{definition}

\begin{definition}
\label{def:bi-connected}
A \emph{non-separable} or \emph{bi-connected} graph has no articulation points. The simplest non-separable graph is a complete graph with three vertices, but we shall also treat a graph with two vertices as non-separable.
\end{definition}

\begin{definition}
\label{def:block}
A  \emph{block} is a maximal connected sub-graph that is \emph{non-separable}. 
\end{definition}

Corresponding to any $G$, if $\mathcal{C}$ be the set of cut-points and $\mathcal{B}$ the set of blocks of $G$, one can associate a (bi-partite) \emph{block-cut tree} $\operatorname{bc}(G)$ as follows:
\begin{definition}
\label{def:block-cut-tree}
\edit{The block cut tree of $G$ is a graph whose vertex set comprises  members of two distinct sets -- the set of articulation points $\mathcal C$, and the set of blocks $\mathcal B$ where each block $B \in \mathcal B$ is thought of as a vertex, i.e.,
$V(\operatorname{bc}(G)) = \mathcal{B} \cup \mathcal{C}$. In $\operatorname{bc}(G)$ there is an edge connecting $B \in \mathcal{B}$ and $c \in \mathcal{C}$ whenever $c \in V(B)$.}
\end{definition}
\edit{In the above definition, we have followed the same notation as Harary \cite{harary,harary-book} who introduced the notion of block and block cut-tree in graph theory.}
\edit{Observe that, we have 
\begin{subequations}
    \begin{gather}
     \underset{B \in \mathcal{B}}{\bigcup} E(B)  = E(G), \\
    \text{and for $B_i, B_j \in \mathcal B$  with $V(B_i) \cap V(B_j) \neq \emptyset$,}  \notag  \\
      V(B_i) \cap V(B_j)  = \{c\}  \; \text{for  some} \;  c \in \mathcal C 
    \end{gather}
    \label{eq:generalized-blocks}
\end{subequations}
}
\begin{remark}
\edit{Another representation, called a \emph{block graph} has vertex set $\mathcal{B}$ and an edge between $B_i, B_j \in \mathcal{B}$ whenever $V(B_i) \cap V(B_j) \neq \emptyset$. However, the block graph need not be a tree, and hence it is not of interest in this article. }
\end{remark}
The block-cut tree $\operatorname{bc}(G)$ is a simplified representation of $G$ wherein sub-networks of $G$ are condensed into nodes. 
Without this simplification, it is an \emph{augmented graph} $G_{\mathcal{BC}}$  defined as follows:
\begin{definition}
\label{def:topology-change-replication}
\edit{The \emph{augmented graph} $G_{\mathcal{BC}}$  has the vertex set $V(G_{\mathcal{BC}}) = \bigcup_{B \in \mathcal{B}} V(B) ~\cup~  \mathcal{C}$ and 
differs from $G$ in possessing replicas of articulation points in each block, and  having  edges connecting each of the replicated vertices with the corresponding original vertex. }
\end{definition}

\subsection{Generalized Block-Cut Trees}
The size of the blocks in the block-cut tree is fixed and determined by the network topology. In many situations, however, it would be advantageous if one could have an analogous tree representation that could start with larger ``blocks" as vertices that successively  get refined to eventually yield the standard block-cut tree.

In order to do so, we consider a set $\mathcal{B}$ of \emph{generalized blocks} 
that adhere to the conditions stated in Eq. \eqref{eq:generalized-blocks}  but need not be non-separable as in Definition~\ref{def:block}.

\begin{lemma}
\label{lemma:generalized-block-cut-tree}
\edit{Consider set $\mathcal{B}$ consisting of selected connected sub-graphs (generalized-blocks) of $G$, and $\mathcal{C} \subset V(G)$ that satisfy Eq~\eqref{eq:generalized-blocks}. A bipartite graph can be constructed with vertex set $\mathcal{B} \cup \mathcal{C}$ with an
edge connecting $B \in \mathcal{B}$ and $c \in \mathcal{C}$ whenever $c \in V(B)$. Such a graph $\operatorname{bc}(G; \mathcal{B}, \mathcal{C})$ is a generalized block-cut tree.}
\end{lemma}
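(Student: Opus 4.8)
The plan is to establish for $bc(G;\mathcal{B},\mathcal{C})$ the two defining properties of a tree: connectedness and the absence of cycles.

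Connectedness is the easier half. Since $\bigcup_{B\in\mathcal{B}}E(B)=E(G)$, every edge of $G$ lies in some generalized block, so --- $G$ being connected with $\#V(G)\geq 3$ --- every vertex of $G$, and in particular every $c\in\mathcal{C}$, lies in at least one generalized block; hence no vertex of the bipartite graph is isolated. Given two generalized blocks $B$ and $B'$, choose a vertex $u$ of $B$, a vertex $v$ of $B'$, and a $u$--$v$ path in $G$. For any two consecutive edges of this path, either some generalized block contains both, or else their common endpoint lies in two distinct generalized blocks and hence, by the second condition of \eqref{eq:generalized-blocks}, in $\mathcal{C}$. Picking, for each successive edge, a generalized block that contains it thus produces a walk from $B$ to $B'$ in $bc(G;\mathcal{B},\mathcal{C})$, so the bipartite graph is connected.

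For acyclicity I would argue by contradiction. Suppose $bc(G;\mathcal{B},\mathcal{C})$ contains a cycle; being bipartite it reads $B_1,c_1,B_2,c_2,\dots,B_m,c_m$ (returning to $B_1$) with $m\geq 2$, the $B_i$ pairwise distinct, the $c_i$ pairwise distinct, and $c_i\in B_i\cap B_{i+1}$ (indices modulo $m$). The second condition of \eqref{eq:generalized-blocks} forces any two generalized blocks to meet in at most one vertex, hence in no edge, so $E(B_1),\dots,E(B_m)$ are pairwise disjoint. As each $B_i$ is connected, pick a simple path $P_i$ inside $B_i$ from $c_{i-1}$ to $c_i$; the concatenation $P_1P_2\cdots P_m$ is then a closed walk in $G$ that repeats no edge, and therefore contains a cycle $Z$ of $G$. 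Since no simple path $P_i$ contains a cycle, $Z$ must use edges of at least two distinct generalized blocks. The contradiction is then reached from two facts: every cycle of $G$ lies inside a single block of $G$, and each block of $G$ lies inside a single generalized block --- together these force $Z$ into one generalized block.

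I expect this last fact --- that no block of $G$ is split between two generalized blocks --- to be the crux. Conditions \eqref{eq:generalized-blocks} must be used with some care, since on their own they do not forbid, for instance, partitioning a triangle into its three edges (any two of which meet in exactly one vertex); the argument therefore has to invoke the standing understanding that the elements of $\mathcal{C}$ are articulation points of $G$, equivalently that $\mathcal{B}$ and $\mathcal{C}$ arise by cutting $G$ at a chosen set of articulation points as in Section~\ref{sec:method-outline}. Under that understanding, removing any $c\in\mathcal{C}$ disconnects $G$ but leaves every block of $G$ within a single component, so no edge of a block can straddle the cut and each block is absorbed whole into one generalized block, which finishes the proof. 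An equivalent and arguably cleaner route, which I would keep in reserve, is to identify $bc(G;\mathcal{B},\mathcal{C})$ with the graph obtained from the ordinary block-cut tree $bc(G)$ by contracting a vertex-disjoint family of connected subtrees (one per generalized block) and to observe that such contractions send trees to trees.
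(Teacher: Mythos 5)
Your proposal is considerably more complete than the paper's own proof, which only rules out a $4$-cycle in $bc(G;\mathcal{B},\mathcal{C})$ (two generalized blocks sharing two cut vertices, contradicting \eqref{eq:generalized-blocks}) and says nothing about longer cycles or about connectedness. Your connectedness argument is correct, and your lifting of a bipartite cycle $B_1,c_1,\dots,B_m,c_m$ to an edge-disjoint closed walk in $G$, hence to a cycle $Z$ of $G$ whose edges meet at least two generalized blocks, is exactly what is needed for $m\geq 3$. You are also right that \eqref{eq:generalized-blocks} alone cannot close the argument: splitting a triangle into its three single-edge subgraphs, with all three vertices placed in $\mathcal{C}$, satisfies \eqref{eq:generalized-blocks} yet produces a $6$-cycle in the bipartite graph, so the statement needs a hypothesis beyond \eqref{eq:generalized-blocks}, and the paper's two-block argument silently overlooks precisely these configurations.

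The genuine gap is in your proposed rescue: requiring the elements of $\mathcal{C}$ to be articulation points of $G$ is still not sufficient. Take a $4$-cycle $v_1v_2v_3v_4$ with one pendant vertex attached to each $v_i$, let $\mathcal{C}=\{v_1,v_2,v_3,v_4\}$ (each is an articulation point), and let $\mathcal{B}$ consist of the four square edges and the four pendant edges; these are exactly the pieces left by removing the set $\mathcal{C}$ at once, condition \eqref{eq:generalized-blocks} holds, yet $bc(G;\mathcal{B},\mathcal{C})$ contains the $8$-cycle through the four square-edge blocks and the four cut vertices. Your sketch (removing a single $c\in\mathcal{C}$ keeps every block of $G$ inside one component) controls one cut vertex at a time but not the simultaneous effect of several cut vertices lying on a common cycle of a two-connected block. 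What you actually need is the fact you yourself called the crux, taken as an explicit hypothesis: every block of $G$ in the sense of Definition~\ref{def:block} lies inside a single generalized block, equivalently each $B\in\mathcal{B}$ is a union of blocks of $G$ glued at cut vertices. That property does hold for the sets produced by the recursive construction in Theorem~\ref{thm:generalized-block-cut-sets}, since cutting one articulation point of the current piece at a time never separates a two-connected subgraph, and under it both your cycle argument and your reserve argument (contracting vertex-disjoint connected subtrees of $bc(G)$) close cleanly. So the correct repair is to add that hypothesis (or prove it for the hierarchically constructed $(\mathcal{B}_k,\mathcal{C}_k)$), not to appeal to articulation-point membership of $\mathcal{C}$.
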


\begin{proof}
Suppose vertices $B_i, B_j \in \mathcal{B}$ have edges with $c_p, c_q \in \mathcal{C}$ to form a cycle. But this is absurd, since it means $B_i$ and $B_j$  have two vertices in common, violating \eqref{eq:generalized-blocks}. Thus the generalized block-cut sets $(\mathcal{B},\mathcal{C})$ induce a generalized block-cut tree associated with $G$.
\end{proof}
For these generalized blocks, $G_{\mathcal{BC}}$ corresponding to $\operatorname{bc}(G; \mathcal{B}, \mathcal{C})$ is defined exactly as  in Definition~\ref{def:topology-change-replication}.
\begin{definition}
For generalized block-cut sets $(\mathcal{B}_u, \mathcal{C}_u)$ and $(\mathcal{B}_l, \mathcal{C}_l)$  that satisfy Eq.~\eqref{eq:generalized-blocks}, we can specify an order relation $\succ$ as follows: $(\mathcal{B}_u, \mathcal{C}_u) \succ (\mathcal{B}_l, \mathcal{C}_l)$ if $\mathcal{C}_u \supset \mathcal{C}_l$ and for every $B_u \in \mathcal{B}_u$, there exists a $B_l \in \mathcal{B}_l$ such that $B_u \subset B_l$.
\end{definition}

\begin{theorem}[Hierarchical Generalized block-cut sets]
\label{thm:generalized-block-cut-sets}
Given a connected graph $G$, there exists  an integer $n$ and hierarchical generalized block-cut sets
$$(\mathcal{B}_0, \mathcal{C}_0) \succ (\mathcal{B}_1, \mathcal{C}_1) \succ \dotsc (\mathcal{B}_n, \mathcal{C}_n)$$
satisfying \eqref{eq:generalized-blocks}
such that $\mathcal{B}_n$ corresponds to Definition~\ref{def:block} and $\mathcal{C}_n$ corresponds to the set of all articulation points of $G$. Moreover, each of the generalized block-cut sets $(\mathcal{B}_k, \mathcal{C}_k)$ corresponds to a generalized block-cut tree $\operatorname{bc}(G; \mathcal{B}_k, \mathcal{C}_k)$ as in Lemma~\ref{lemma:generalized-block-cut-tree} with $\operatorname{bc}(G; \mathcal{B}_n, \mathcal{C}_n)$ corresponding to $\operatorname{bc}(G)$ in Definition~\ref{def:block-cut-tree}.
    
\end{theorem}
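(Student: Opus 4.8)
The plan is to build the chain from its coarse end, refining one generalized block at a time until the genuine block decomposition is reached. I would begin with the trivial pair $(\mathcal{B}_0,\mathcal{C}_0)=(\{G\},\emptyset)$, which satisfies \eqref{eq:generalized-blocks} vacuously and which Lemma~\ref{lemma:generalized-block-cut-tree} turns into the single-vertex generalized block-cut tree. The elementary refinement move is: given $(\mathcal{B}_k,\mathcal{C}_k)$ that has a separable member $B\in\mathcal{B}_k$ (one possessing an articulation point $c$, by Definitions~\ref{def:articulation-pt}, \ref{def:bi-connected}), let $D_1,\dots,D_m$ ($m\geq 2$) be the connected components of $B-c$, let $B^{(i)}$ be the subgraph of $B$ induced on $V(D_i)\cup\{c\}$, and put $\mathcal{B}_{k+1}=(\mathcal{B}_k\setminus\{B\})\cup\{B^{(1)},\dots,B^{(m)}\}$ and $\mathcal{C}_{k+1}=\mathcal{C}_k\cup\{c\}$. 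I would iterate this move until no separable member remains, and let $n$ be the number of moves performed.

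Next I would check that ``$(\mathcal{B}_k,\mathcal{C}_k)$ satisfies \eqref{eq:generalized-blocks}'' is an invariant of the move, so that Lemma~\ref{lemma:generalized-block-cut-tree} keeps applying: the edge-cover condition is immediate ($\bigcup_i E(B^{(i)})=E(B)$), and the single-vertex-intersection condition reduces to the observation that an untouched member $B'$ met $B$ in at most one vertex $c'\in\mathcal{C}_k$, and this $c'$ lies in at most one lobe $B^{(i)}$ (or equals $c$, in which case $B'\cap B^{(i)}=\{c\}$). A parallel induction also shows $\mathcal{C}_k$ is exactly the set of vertices lying in at least two members of $\mathcal{B}_k$. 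Termination is then easy: every move strictly increases $\#\mathcal{B}_k$, which is bounded by $\#E(G)$ because the members are edge-disjoint and each has an edge; so the loop halts with every member of $\mathcal{B}_n$ non-separable (each biconnected or a single edge).

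The substantive step is to recognise $(\mathcal{B}_n,\mathcal{C}_n)$ as the actual block-cut decomposition. I would isolate the lemma: \emph{if $(\mathcal{B},\mathcal{C})$ satisfies \eqref{eq:generalized-blocks} for a connected graph $H$, then any vertex belonging to two members of $\mathcal{B}$ is an articulation point of $H$.} Its proof uses that $bc(H;\mathcal{B},\mathcal{C})$ is a tree (Lemma~\ref{lemma:generalized-block-cut-tree}): such a vertex $c$ has degree $\geq 2$ there, so deleting it leaves at least two subtrees; two members $B_p,B_q$ in distinct subtrees can share no vertex $v\neq c$, for $v$ would then be adjacent to both in the tree, contradicting that the unique path from $B_p$ to $B_q$ runs through $c$; hence, taking for each subtree the union of the vertex sets of its members with $c$ deleted, one obtains nonempty, pairwise disjoint pieces of $V(H)\setminus\{c\}$ joined by no edge of $H$, so $H-c$ is disconnected. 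Applied with $H=G$ this gives $\mathcal{C}_k\subseteq\{\text{articulation points of }G\}$ for all $k$; applied to a biconnected component $\widehat B$ of $G$, it shows $\widehat B$ cannot be the union of two or more members of $\mathcal{B}_n$. Combined with the facts that each member of $\mathcal{B}_n$, being non-separable, lies in a single biconnected component of $G$ and that the members cover $E(G)$, this forces $\mathcal{B}_n$ to be exactly the set of blocks of $G$, and $\mathcal{C}_n$, being the set of vertices in two blocks, to be exactly the set of articulation points of $G$; hence $bc(G;\mathcal{B}_n,\mathcal{C}_n)$ coincides with $bc(G)$ of Definition~\ref{def:block-cut-tree}. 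Finally, since each new lobe $B^{(i)}$ is contained in the member $B$ it replaced, the other members persist, and $\mathcal{C}_{k+1}\supseteq\mathcal{C}_k$, consecutive pairs of the sequence are related by the order $\succ$, so the sequence is the required hierarchical chain terminating in $(\mathcal{B}_n,\mathcal{C}_n)$.

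The real obstacle is the lemma just stated --- that the combinatorial record of ``cut-points'' accumulated by the refinement never includes a vertex failing to be a genuine articulation point of $G$, and at the end includes all of them. Everything else (invariance of \eqref{eq:generalized-blocks}, termination, the $\succ$-ordering) is bookkeeping; the content is upgrading ``$\mathcal{B}_n$ is \emph{some} decomposition into blocks'' to ``$\mathcal{B}_n$ is \emph{the} block decomposition'', which rests essentially on the tree property of Lemma~\ref{lemma:generalized-block-cut-tree} and uniqueness of paths in a tree.
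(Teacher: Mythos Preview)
Your proposal is correct and follows the same recursive refinement as the paper's proof: start from the trivial pair $(\{G\},\emptyset)$ and, while some member is separable, split it at one of its articulation points into the lobes obtained by reattaching that point to each component, updating $\mathcal{C}$ accordingly. The paper's argument is terser and simply asserts that the terminal pair coincides with the standard block decomposition; your isolated lemma---that any vertex shared by two members of a pair satisfying \eqref{eq:generalized-blocks} must be an articulation point of the ambient graph, via the tree structure of Lemma~\ref{lemma:generalized-block-cut-tree}---together with your explicit termination bound, supplies the justification that the paper leaves implicit.
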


\begin{proof}
    The construction proceeds recursively as follows, starting with  a connected network $G$ and the trivial sets $\mathcal{B}_0= \{G\}, \mathcal{C}_0 = \emptyset$ that satisfy \eqref{eq:generalized-blocks}:
\begin{enumerate}
    \item If $G$ is non-separable, the algorithm terminates trivially.
    \edit{Otherwise, designate $B^* = G$ (for convenience) and choose an articulation point $c_{1} \in V(B^*)$.} Initialize $\mathcal{B}_1 = \mathcal{B}_0 \setminus B^*$.
    \item \label{step:2} Let $\{B_{1,i}\;|\; 1\leqslant i \leqslant k_1\}$ for $k_1 \geqslant 2$ denote the connected components of $B^* - c_1$ (Figure ~\ref{fig:blocks}). 
    \item \label{step:3} For each $1 \leqslant i \leqslant k_1$, construct a sub-graph $B_{1,i}^+$ from $B_{1,i}$ by adding vertex $c_1$ and the set of edges $\{(v, c_1) \in E(B^*)\;|\; v \in B_{1,i} \}$ (Figure~\ref{fig:blocks}).
    \item Now  set $\mathcal{B}_1 = \mathcal{B}_1 \cup \{B_{1,i}^+\;|\; 1\leqslant i\leqslant k_1\}$, $\mathcal{C}_1 = \mathcal{C}_0 \cup \{c_1\}$. The sets $\mathcal{B}_1, \mathcal{C}_1$ obey \eqref{eq:generalized-blocks} and moreover, by Lemma~\ref{lemma:generalized-block-cut-tree}, we can associate a tree, $\operatorname{bc}(G; \mathcal{B}_1, \mathcal{C}_1)$ with $G$.
    \item If after $m$ steps, all the blocks in $\mathcal{B}_m$ are non-separable then we have obtained the block-cut tree $\operatorname{bc}(G)$ described in Definition~\ref{def:block-cut-tree}. Otherwise, we can choose a non-separable $B^* \in \mathcal{B}_m$ and a corresponding articulation point $c_{m+1} \in V(B^*)$ and initialize
    $\mathcal{B}_{m+1} = \mathcal{B}_m\setminus B^*$.
    \item Similar to step \ref{step:2}, let $\{B_{m+1,i}\;|\; 1\leqslant i\leqslant k_{m+1}\}$ for some $k_{m+1} \geqslant 2$ denote the connected components of $B^* - c_{m+1}$. Now, construct $\{B_{m+1,i}^+ \;|\; 1\leqslant i\leqslant k_{m+1}\}$  from $B_{m+1,i}$ by adding edges as described earlier in step \ref{step:3}.
    \item  Set $\mathcal{B}_{m+1} = \mathcal{B}_{m+1} \cup \{B_{m+1,i}^+\;|\;1\leqslant i\leqslant k_{m+1}\}$ and $\mathcal{C}_{m+1} = \mathcal{C}_{m} \cup \{c_{m+1}\}$; they obey Eq~\eqref{eq:generalized-blocks} and the  tree $\operatorname{bc}(G; \mathcal{B}_{m+1}, \mathcal{C}_{m+1})$ is now obtained. 
    \item Eventually, for some $m=n$, this recursion will terminate, for every block in $\mathcal{B}_n$ will be non-separable. Then the tree $\operatorname{bc}(G; \mathcal{B}_n, \mathcal{C}_n)$ will coincide with the \emph{block-cut tree} $\operatorname{bc}(G)$ as in Definition~\ref{def:block-cut-tree}.
\end{enumerate}
\end{proof}
\begin{figure}
    \centering
    \includegraphics[scale=0.25]{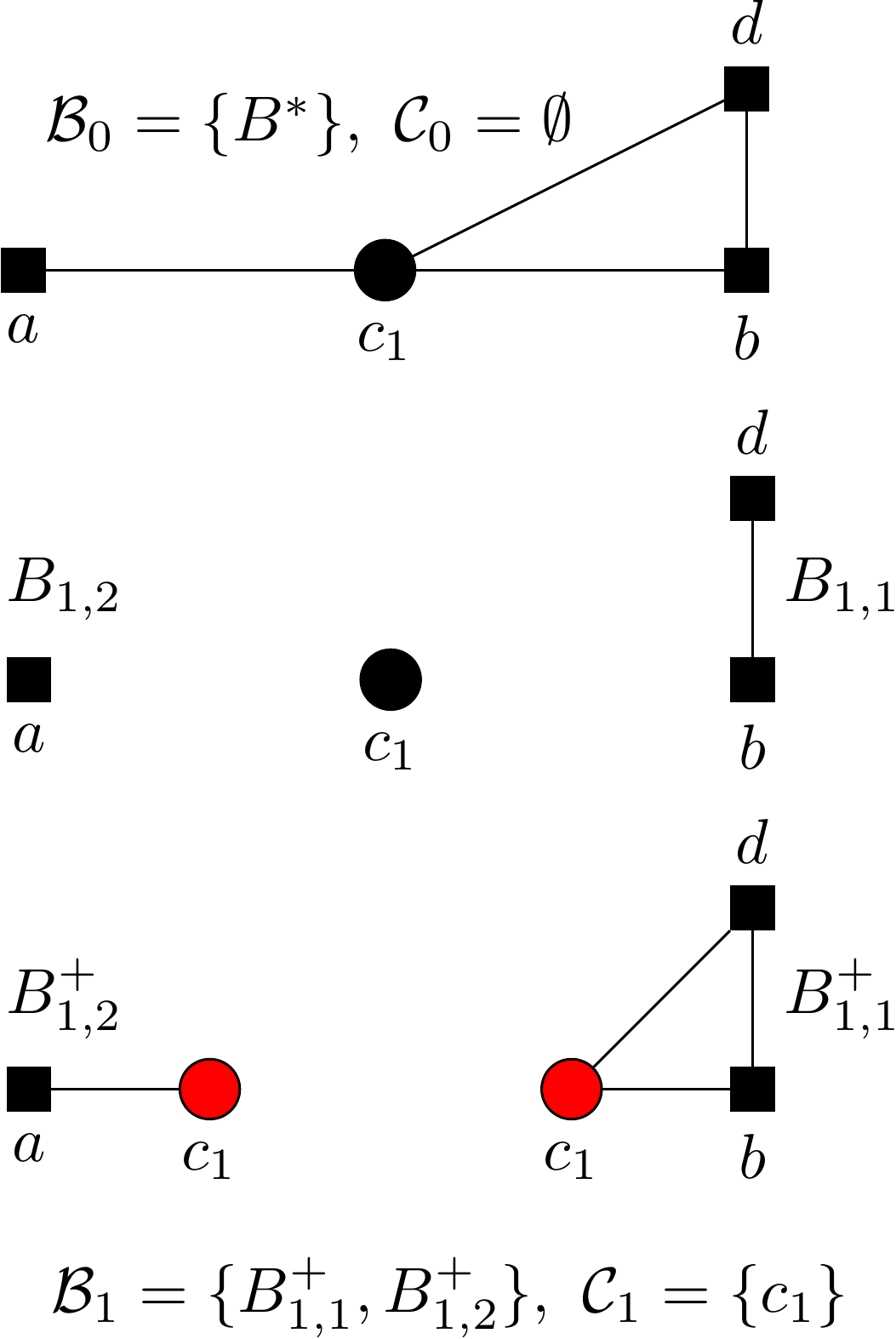}
    \caption{An illustration of Step~\ref{step:2} in the proof of Theorem~\ref{thm:equivalence-generalized-block-cut-system}.}
    \label{fig:blocks}
\end{figure}
The proof of Theorem~\ref{thm:generalized-block-cut-sets} provides a way to use it to control the block sizes in
\begin{corollary}
\label{cor:block-size-control}
Given a connected graph $G$ and a positive integer  $N$, one of the following must hold: (i) A generalized block cut set $(\mathcal{B}, \mathcal{C})$ can be found that satisfies the condition $|V(B)| \leqslant N \; \forall B \in \mathcal{B}$ or (ii) $|V(B)| > N$ for a non-separable $B \in \mathcal{B}$.
\end{corollary}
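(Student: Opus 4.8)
The plan is to re-run the recursive refinement from the proof of Theorem~\ref{thm:generalized-block-cut-sets}, but to replace its arbitrary choice of articulation point by a greedy rule driven by the target size $N$. I would start from the trivial pair $(\mathcal{B}_0,\mathcal{C}_0)=(\{G\},\emptyset)$, which satisfies \eqref{eq:generalized-blocks}. At stage $m$, set $M_m:=\max_{B\in\mathcal{B}_m}\#V(B)$ and pick a block $B^\ast\in\mathcal{B}_m$ of this maximum size. If $M_m\le N$, stop and declare that case (i) holds, with $(\mathcal{B}_m,\mathcal{C}_m)$ as the witness. Otherwise $B^\ast$ is oversized; if $B^\ast$ has no articulation point then by Definition~\ref{def:bi-connected} it is non-separable, so stop and declare that case (ii) holds. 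If instead $B^\ast$ has an articulation point $c$, perform exactly the split of the theorem: delete $B^\ast$ from $\mathcal{B}_m$, take the $k\ge 2$ connected components $\{B_i\}_1^{k}$ of $B^\ast-c$, form the augmented pieces $\{B_i^+\}_1^{k}$ by re-attaching $c$ together with the $B^\ast$-edges joining it to $B_i$, and set $\mathcal{B}_{m+1}=(\mathcal{B}_m\setminus\{B^\ast\})\cup\{B_i^+\}_1^{k}$ and $\mathcal{C}_{m+1}=\mathcal{C}_m\cup\{c\}$. Theorem~\ref{thm:generalized-block-cut-sets} already guarantees this preserves \eqref{eq:generalized-blocks}, so $(\mathcal{B}_{m+1},\mathcal{C}_{m+1})$ is again a generalized block-cut set and, by Lemma~\ref{lemma:generalized-block-cut-tree}, carries an associated generalized block-cut tree; that step is inherited verbatim.

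First I would record the elementary fact that a split strictly shrinks the block it acts on. Since $c$ is an articulation point, $B^\ast-c$ has $k\ge 2$ components, so each $B_i$ omits $c$ and also at least one vertex lying in a different component; hence $\#V(B_i^+)=\#V(B_i)+1\le \#V(B^\ast)-1$. Thus every block produced by a split is strictly smaller than its parent. (When $M_m>N$ and $N\ge 2$ the selected $B^\ast$ has $M_m\ge 3$ vertices, so it is never the two-vertex block of Definition~\ref{def:block}; the degenerate case $N\le 1$ is covered by the same argument, since then no block can have size $\le N$ and the process simply runs until it hits a non-separable block, yielding case (ii).)

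The hard part will be establishing \emph{termination}, because the total vertex count summed over all blocks is not monotone: splitting an $s$-vertex block into $k$ pieces produces pieces of total size $s+k-1>s$. I would argue well-foundedness through a lexicographic potential instead. Let $t_m:=\#\{B\in\mathcal{B}_m:\#V(B)=M_m\}$. A split acts on a maximum-size block and, by the previous paragraph, replaces it by strictly smaller blocks; all other blocks of $\mathcal{B}_m$ have size $\le M_m$. Hence either $t_m$ drops by one (if $t_m\ge 2$), or, when $t_m=1$, the value $M_m$ strictly decreases. So $(M_m,t_m)$ strictly decreases in the well-founded lexicographic order on $\mathbb{N}\times\mathbb{N}$; equivalently, the multiset $\{\#V(B):B\in\mathcal{B}_m\}$ strictly decreases in the Dershowitz--Manna multiset order. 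Therefore the recursion halts after finitely many stages, and it halts for exactly one of two reasons: either no block exceeds $N$, giving case (i) with the constructed generalized block-cut set, or a maximum-size oversized block turned out to be non-separable, giving case (ii). Since the process always halts in one of these two ways, at least one of (i) and (ii) holds, which is the claim. The only genuine obstacle is this termination bookkeeping; preservation of \eqref{eq:generalized-blocks} and the tree property come for free from Theorem~\ref{thm:generalized-block-cut-sets} and Lemma~\ref{lemma:generalized-block-cut-tree}.
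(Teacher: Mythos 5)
Your proposal is correct and follows essentially the same route the paper intends: Corollary~\ref{cor:block-size-control} is justified there simply by running the recursive splitting from the proof of Theorem~\ref{thm:generalized-block-cut-sets} and stopping either when every generalized block has at most $N$ vertices or when an oversized block admits no articulation point (hence is non-separable). Your only addition is the explicit well-foundedness bookkeeping (the lexicographic/multiset decrease of block sizes), which makes rigorous the termination that the paper's construction asserts without detail, but it is the same argument in substance.
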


\subsection{Steady-state network flow in generalized block-cut networks}
In order to exploit a generalized block-cut set, we need to ensure that the addition of (replicated) vertices and edges does not change the solution of the nonlinear system \eqref{eq:NF-pi} defined on  $G$. This will be true if we can show that the equations for the modified network, i.e., \eqref{eq:NF-pi} over $G_{\mathcal{BC}}$ reduce to \eqref{eq:NF-pi} over $G$.  

For ease of notation, let $f^{\operatorname{in}}(i; G)$ and $f^{\operatorname{out}}(i; G)$ denote the total incoming and outgoing flow in $G$ for the junction $i \in V(G)$, i.e., 
\[
f^{\operatorname{in}}(i; G) = \sum_{(j, i) \in E(G)} f_{ji} \text{ and } f^{\operatorname{out}}(i; G) = \sum_{(i, j) \in E(G)} f_{ij}.
\]

It is sufficient to analyze the case  $\mathcal{C}= \{c_1\}$,  since multiple elements of $\mathcal{C}$ can be considered individually in turn. 
Following the same notation as earlier in Theorem~\ref{thm:generalized-block-cut-sets}, let $\{B_1, B_2, \dotsc, B_m\}$ be the connected components of $G$ when the vertex $c_1$ is removed.
For each $B_i$, the cut-point $c_1$ is replicated as $c_{1i}$ in $G_{\mathcal{BC}}$, with edges of the form $(v, c_1)$ replaced by $(v, c_{1i})$  to form $B_i^+$, and edges of the form $(c_{1i}, c_1)$ added. The replicated vertices are tagged with distinct indices to prevent confusion.
Note that $q^*_{c_{1i}} = 0$ for $i = 1, 2, \dotsc m$. 
Thus $2m$ additional degrees of freedom are introduced for the network flow equations \eqref{eq:NF-pi} defined on $G_{\mathcal{BC}}$ as follows:
\begin{subequations}
\begin{flalign}
 & \gamma^{*}_{ij}\pi_i - \pi_j = g^{*}_{ij}(f_{ij}) \quad \forall (i, j) \in \bigcup_{1}^{m}E(B_k^+),  \label{eq:NF-pi-edge-1}\\
  & \pi_{c_{1i}} = \pi_{c_1}, \quad i = 1, 2, \dotsc m, \label{eq:pot-equal} \\ 
& f^{\operatorname{out}}(i; G_{\mathcal{BC}})  - f^{\operatorname{in}}(i; G_{\mathcal{BC}}) = q^{*}_i \quad \forall i \in \bigcup_{1}^{m}V(B_k) \setminus N_s,  \label{eq:NF-pi-balance-nonci}\\
 & f^{\operatorname{out}}(c_1; G_{\mathcal{BC}}) = q^*_{c_1}, \label{eq:c1} \\
 & f^{\operatorname{out}}(c_1; B_i^+) - f^{\operatorname{in}}(c_1; B_i^+) + f_{c_{1i} c_1} = q^*_{c_{1i}}, ~i = 1, \dotsc m  \label{eq:c1i} \\
 & \pi_i = \pi^{*}_i \; \forall i \in N_s
\end{flalign}
\label{eq:NF-GBC}
\end{subequations}
The following theorem shows the equivalence of the two systems \eqref{eq:NF-pi} and \eqref{eq:NF-GBC}.
\begin{theorem}
\label{thm:equivalence-generalized-block-cut-system}
Given a connected network $G$ and a generalized block-cut set $(\mathcal{B}, \mathcal{C})$,
the steady-state network flow equations for $G$ and $G_{\mathcal{BC}}$ are equivalent.
\end{theorem}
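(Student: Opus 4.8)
The plan is to set up an explicit bijection between solutions of $\mathcal{NF}_\pi$ on $G$ and solutions on $G_{\mathcal{BC}}$, and then verify that the defining equations transform into one another under this bijection. First I would describe the map in the forward direction: given a solution $(\pi, f)$ of \eqref{eq:NF-pi} on $G$, extend it to $G_{\mathcal{BC}}$ by assigning to every replica $c^{(B)}$ of an articulation point $c$ the common potential $\pi_c$, and by assigning zero flow to every new ``connector'' edge $(c, c^{(B)})$ that Definition~\ref{def:topology-change-replication} introduces. For an original edge $(i,j) \in E(G)$, recall that by \eqref{eq:generalized-blocks} it lies in a unique block $B \in \mathcal{B}$; its endpoints in $G_{\mathcal{BC}}$ are the representatives of $i$ and $j$ inside that block (either the original vertex, if it is not a cut-point, or the replica $i^{(B)}$ if it is), and we keep the same flow value $f_{ij}$. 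One then checks the three families of equations. The edge relations \eqref{eq:NF-pi-edge} for original edges hold because $\gamma^*_{ij}$, $g^*_{ij}$ and $f_{ij}$ are unchanged and the potentials at the block-representatives equal $\pi_i, \pi_j$; the new connector edges can be given, say, $\gamma^* = 1$ and $g^* \equiv 0$, so their edge relation $\pi_c - \pi_{c^{(B)}} = 0$ is satisfied by construction. The slack condition \eqref{eq:NF-pi-slack} is inherited verbatim on the original slack vertices (which are untouched).

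The only substantive point is flow balance \eqref{eq:NF-pi-balance}, and this is where the bookkeeping needs care — I expect this to be the main obstacle, though it is more a matter of organizing notation than of genuine difficulty. For a non-articulation vertex $v$ that lies in block $B$, all edges of $G$ incident to $v$ lie in $B$ (otherwise $v$ would separate them), so in $G_{\mathcal{BC}}$ the balance equation at $v$ is literally the same sum as in $G$, and it holds with the same right-hand side $q^*_v$. For an articulation point $c$: in $G$ its incident edges are distributed among the blocks $B_1, \dots, B_m$ containing $c$, and \eqref{eq:NF-pi-balance} reads $\sum_{\text{all incident edges}} (\text{signed } f) = q^*_c$. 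In $G_{\mathcal{BC}}$ the original vertex $c$ now has as its only incident edges the $m$ connector edges $(c, c^{(B_k)})$, while each replica $c^{(B_k)}$ carries exactly those original edges of $G$ that lay in block $B_k$, plus the one connector edge back to $c$. So the balance equation at replica $c^{(B_k)}$ determines the connector flow on $(c, c^{(B_k)})$ to be the net flow that $c$ exchanged with block $B_k$ in the original solution, and the balance equation at $c$ itself becomes the sum of these $m$ connector flows equalling $q^*_c$ — which is precisely the regrouping of the original balance equation at $c$ by blocks. (Here I need to be a little careful: to keep balance at $c$ honest, $q^*_c$ stays attached to the original vertex $c$, and the replicas $c^{(B_k)}$ get injection $0$; if $c$ happened to be a slack vertex one keeps $\pi_c = \pi^*_c$ on the original and the replicas simply copy it.) I would then note this forces the connector flows uniquely, so the extension map is well-defined and, in fact, the only possible extension.

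For the reverse direction, I would start from an arbitrary solution $(\tilde\pi, \tilde f)$ of \eqref{eq:NF-pi} on $G_{\mathcal{BC}}$ and run the argument backwards: the edge relation on each connector edge $(c, c^{(B)})$ forces $\tilde\pi_{c^{(B)}} = \tilde\pi_c$, so all replicas of a cut-point share one potential, which we take as $\pi_c$; setting $\pi_v = \tilde\pi_v$ on the remaining vertices and $f_{ij} = \tilde f_{ij}$ on the original edges yields a candidate for $G$. Reversing the balance computation above — summing the balance equations at $c$ and all its replicas $c^{(B_k)}$, the $m$ connector-edge flows cancel in pairs — recovers \eqref{eq:NF-pi-balance} at $c$ on $G$, and the edge relations and slack conditions carry over immediately. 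Since the two constructions are visibly inverse to each other, this establishes a bijection between the solution sets and hence the claimed equivalence. Finally I would remark that this is consistent with, and in the non-degenerate case sharpened by, Proposition~\ref{prop:uniqueness}: if \ref{assumption:slacks}--\ref{assumption:cycle-pipe} hold for $G$ then both systems have at most one solution, and the bijection says they have a solution simultaneously.
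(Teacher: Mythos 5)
Your proposal is correct and follows essentially the same route as the paper: replicate the cut-point in each block, force the replicas to share the original potential (you do this via lossless connector edges with $g^*\equiv 0$, the paper simply imposes the equality as a constraint), and observe that the balance equations at the replicas and at the original vertex regroup and sum to recover the original balance equation, while all other equations are untouched. Your explicit two-way bijection and the remark on uniqueness of the connector flows are slightly more detailed than the paper's presentation, but they are refinements of the same argument rather than a different method.
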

\begin{proof}
Using \eqref{eq:pot-equal} in \eqref{eq:NF-pi-edge-1} allows  us to consolidate the two equations in the form
\begin{equation}
    \gamma^{*}_{ij}\pi_i - \pi_j = g^{*}_{ij}(f_{ij}) \quad \forall (i, j) \in E(G) \label{eq:NF-pi-edge-G}
\end{equation}
with the caveat that an appropriate $f_{vc_{1i}}$ and $f_{c_{1i}v}$ appear in place of  $f_{vc_{1}}$ and $f_{c_{1}v}$  respectively.
Notice that the left-hand side of \eqref{eq:c1} corresponds to
\begin{equation} 
f^{\operatorname{out}}(c_1; G_{\mathcal{BC}}) = - \sum_{i = 1}^m f_{c_{1i} c_1} \label{eq:c1_out}
\end{equation}
Now, if we sum \eqref{eq:c1} and \eqref{eq:c1i} and use \eqref{eq:c1_out} together with $q_{c_{i1}}^* = 0$ for all $i \in \{1, \dots, m\}$, we get
\begin{equation}
    \sum_{i=1}^{m} \left\{f^{\operatorname{out}}(c_1; B_i^+) - f^{\operatorname{in}}(c_1; B_i^+)\right\} =  q^{*}_{c_1}
    \label{eq:NF-pi-balance-c1}
\end{equation}
Thus, the system \eqref{eq:NF-GBC} reduces to
\begin{subequations}
\begin{flalign}
& \gamma^{*}_{ij}\pi_i - \pi_j = g^{*}_{ij}(f_{ij}) \quad \forall (i, j) \in E(G),  \\
& f^{\operatorname{out}}(i; G_{\mathcal{BC}})  - f^{\operatorname{in}}(i; G_{\mathcal{BC}}) = q^{*}_i \quad \forall i \in \bigcup_{1}^{m}V(B_k) \setminus N_s, \\
& \sum_{i=1}^{m} \left\{f^{\operatorname{out}}(c_1; B_i^+) - f^{\operatorname{in}}(c_1; B_i^+)\right\} =  q^{*}_{c_1} \\ 
& \pi_i = \pi^{*}_i \; \forall i \in N_s. 
\end{flalign}
\end{subequations}
This is precisely the system \eqref{eq:NF-pi} for $G$ wherein  an appropriate $f_{vc_{1i}}$ and $f_{c_{1i}v}$ appear in place of the unknowns $f_{vc_{1}}$ and $f_{c_{1}v}$ respectively. Equivalently, this implies that if we can solve the nonlinear system for $G_{\mathcal{BC}}$, then  the nodal potentials and most of the flows of $G_{\mathcal{BC}}$ correspond to those of $G$ in an obvious way.  The exceptions are the flows of the form $f_{vc_{1i}} \in G_{\mathcal{BC}}$ which correspond to $f_{vc_1} \in G$.
 \end{proof}

\subsection{Hierarchical Solution Algorithm}

For very large networks where solution of the full nonlinear system could be computationally prohibitive, Corollary~\ref{cor:block-size-control} and Theorem~\ref{thm:equivalence-generalized-block-cut-system} allow one to design a solution scheme for a generalized block-cut tree that solves the nonlinear system one block at a time and thus circumvent the computational issues caused by large networks.

One limitation of this scheme is that it requires all slack nodes  belong to the same generalized block, which could be violated in case of multiple slack nodes.
Before we get to the solution scheme, the following useful lemma is presented.

\begin{lemma}
\label{lemma:flow-solve}
    Given a network with $G$ with injections as in \eqref{eq:NF-pi} and a generalized block-cut set $(\mathcal{B}, \mathcal{C})$ such that a generalized-block $B \in \mathcal{B}$ contains all the slack vertices, the flow in each edge of the generalized block-cut tree $\operatorname{bc}(G; \mathcal{B}, \mathcal{C})$ is uniquely determined.
\end{lemma}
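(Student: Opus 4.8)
The plan is to reduce the claim to a flow-conservation statement on a tree and then invoke the elementary fact that flows on a tree are pinned down by the node divergences. Two ingredients are already available: by Lemma~\ref{lemma:generalized-block-cut-tree} (and Theorem~\ref{thm:generalized-block-cut-sets}) the graph $bc(G;\mathcal{B},\mathcal{C})$ is a tree whose edges are the pairs $\{B,c\}$ with $c\in B$, and by Theorem~\ref{thm:equivalence-generalized-block-cut-system} a solution of \eqref{eq:NF-pi} on $G$ corresponds to a solution on $G_{\mathcal{BC}}$ in which each such tree edge $\{B,c\}$ is realised by the single edge joining $c$ to its replica $c_B \in V(B^{+})$. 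Thus the flow ``on the tree edge $\{B,c\}$'' is, by definition, the flow on the edge $(c,c_B)$ of $G_{\mathcal{BC}}$, and the argument is carried out in $G_{\mathcal{BC}}$.

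First I would set up the node data. To a cut-point $c \in \mathcal{C}$ assign the scalar $q^{*}_{c}$, and to a generalized block $B \in \mathcal{B}$ the scalar $Q(B) := \sum_{v \in V(B) \setminus \mathcal{C}} q^{*}_{v}$, the sum of injections over the vertices that are private to $B$; by \eqref{eq:generalized-blocks} each vertex of $G$ contributes to exactly one of these sums. The balance equation \eqref{eq:NF-pi-balance}, summed over the private vertices of $B$ together with the replica vertices $c_B$ for the cut-points incident to $B$, makes every edge internal to $B^{+}$ cancel in pairs --- a private vertex of $B$, and likewise a replica $c_B$, has all its incident $G_{\mathcal{BC}}$-edges inside $B^{+}$ --- and what survives is exactly the identity ``(net flow entering $B$ along its incident tree edges) $= Q(B)$''; for a cut-point node the corresponding identity is precisely \eqref{eq:flow-balance-cut-point-new}, and \eqref{eq:flow-balance-replication-point} records the same bookkeeping one edge at a time. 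Hence $bc(G;\mathcal{B},\mathcal{C})$, equipped with these node values, is a pure flow-conservation problem on a tree, the only node at which conservation is \emph{not} imposed being the block $B^{\star} \in \mathcal{B}$ that contains every slack vertex, where \eqref{eq:NF-pi-slack} takes the place of balance.

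Now root the tree at $B^{\star}$. For an arbitrary tree edge $e$, deleting $e$ splits $bc(G;\mathcal{B},\mathcal{C})$ into the component $T^{\star}$ containing $B^{\star}$ and a component $T$ that does not; I would show that every $G_{\mathcal{BC}}$-vertex lying inside $T$ is non-slack, so that all the balance equations needed for $T$ are available. A slack vertex that is private to some block lies in $B^{\star}$, which is in $T^{\star}$; reading the hypothesis in the natural way --- that the blocks other than $B^{\star}$ are slack-free, so that in particular no cut-point of $\mathcal{C}$ is a slack vertex (one never replicates a slack node) --- every cut-point node of the tree is non-slack, every replica $c_B$ is non-slack (its potential is merely tied to that of $c$, itself a free variable), and every block node other than $B^{\star}$ has only non-slack private vertices. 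Summing \eqref{eq:NF-pi-balance} over precisely the $G_{\mathcal{BC}}$-vertices sitting inside $T$, every $G_{\mathcal{BC}}$-edge with both endpoints in $T$ cancels and the single surviving term is $\pm f_{e}$; therefore $f_{e}$ equals the signed sum of the node values attached to $T$, an expression in the given data $\{q^{*}_{i}\}$ alone. Since $e$ was arbitrary, the flow in every edge of $bc(G;\mathcal{B},\mathcal{C})$ is uniquely determined.

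The routine parts are the pairwise-cancellation bookkeeping and the sign conventions. The one genuinely delicate point is the status of the cut-points relative to $N_{s}$: were a slack vertex allowed to be a cut-point, the subtree hanging below it would contain a node carrying no balance equation, and the flow on the edge joining it to $B^{\star}$ would not be fixed by conservation alone --- which is why I would state explicitly (or extract from the intended reading of the hypothesis) that $\mathcal{C} \cap N_{s} = \emptyset$. With that in hand, everything else is the elementary ``divergence determines flow on a tree'' observation, applied edge by edge.
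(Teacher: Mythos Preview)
Your proof is correct and follows essentially the same approach as the paper: both reduce the claim to the elementary fact that, on a tree with known divergence at every node but one, flow conservation uniquely determines all edge flows. The paper phrases this algebraically via invertibility of the reduced incidence matrix (the linear system $\tilde A\tilde f=\tilde q$), whereas you give the equivalent combinatorial subtree-sum formula; your explicit identification of the assumption $\mathcal{C}\cap N_s=\emptyset$ is a point the paper leaves implicit here and only states later in the proof of Theorem~\ref{thm:hierarchical-nonlin-soln}.
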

\begin{proof}
    The generalized block-cut tree has $n = |\mathcal{B}| + |\mathcal{C}|$ vertices and $n - 1$ edges. By  assigning zero injection to replicated vertices and adding up all the nodal injections in each block, we find that each vertex in the generalized block-cut tree can be assigned an effective injection, except for the vertex representing the block containing slack nodes (since injections at slack vertices are unknown). Denote the agglomerated injection vector by 
    $\tilde{q} \in \mathbb{R}^{(n-1)}$ 
    while the flows in the edges are denoted by $\tilde{f} \in \mathbb{R}^{(n-1)}$.
    \edit{If $\tilde{A}$ be the $(n-1) \times (n-1)$ incidence matrix obtained by ignoring the vertex that represents the block containing slack nodes, then we have the linear system $\tilde{A} \tilde{f} = \tilde{q}$ which has a unique solution since $\tilde{A}$ corresponding to a tree is invertible. Thus the flows $\tilde{f}$ are known. }
\end{proof}

The generalized block-cut tree can be traversed in a finite number of steps starting with any given block (vertex) and this fact forms the basis of   the solution scheme  we now describe:

\begin{theorem}
\label{thm:hierarchical-nonlin-soln}
Given injections and slack potentials as in \eqref{eq:NF-pi}, consider a network $G$ and a generalized block-cut set $(\mathcal{B}, \mathcal{C})$ where all the slack vertices are in a single generalized-block $B^* \in \mathcal{B}$. Then the solution of the steady-state flow system \eqref{eq:NF-pi}  of $G$ may be obtained by solution of the $|\mathcal{B}|$ smaller nonlinear systems  in $G_{\mathcal{BC}}$ in an appropriate sequence. 
\end{theorem}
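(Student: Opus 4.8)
The plan is to reduce the question to the augmented graph $G_{\mathcal{BC}}$ and then use the tree structure of $bc(G;\mathcal{B},\mathcal{C})$ to schedule the block solves. By Theorem~\ref{thm:equivalence-generalized-block-cut-system} the systems \eqref{eq:NF-pi} for $G$ and for $G_{\mathcal{BC}}$ are equivalent, so it suffices to exhibit a way of producing a solution of \eqref{eq:NF-pi} on $G_{\mathcal{BC}}$ by solving the subgraph-systems one generalized block at a time. By Lemma~\ref{lemma:generalized-block-cut-tree}, $bc(G;\mathcal{B},\mathcal{C})$ is a tree, which I would root at the distinguished block $B^{*}$. This induces the usual parent/child structure: every block $B\neq B^{*}$ has a unique parent cut-point $c_{B}$ (the first cut-point on the path from $B$ to $B^{*}$), and every cut-point $c$ has a unique parent block and a set of child blocks. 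Note that in $G_{\mathcal{BC}}$ the tree edge joining $c$ to an incident block $B$ is exactly the replica edge $(c,c^{B})$ between $c$ and its copy inside $B$.

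First I would invoke Lemma~\ref{lemma:flow-solve}: since every slack vertex of $G$ lies in $B^{*}$, a single linear solve on the tree determines the flow on every tree edge, equivalently the flow $f_{cc^{B}}$ on every replica edge of $G_{\mathcal{BC}}$. With these inter-block flows in hand, I would process the blocks in any order that visits a block only after the parent block of its parent cut-point (a breadth-first order from $B^{*}$ works), solving for each block $B$ the following instance of \eqref{eq:NF-pi} posed on the subgraph $B$: the parent replica $c_{B}^{B}$ is treated as a slack vertex whose potential has been handed down by the already-completed solve of $c_{B}$'s parent block (for $B=B^{*}$ there is no parent, and the genuine slack vertices of $G$, all of which lie in $B^{*}$, carry their prescribed potentials instead); each child-cut-point replica $c^{B}$ is a non-slack vertex whose injection is set equal to the replica-edge flow $f_{cc^{B}}$ supplied by Lemma~\ref{lemma:flow-solve}; and every remaining vertex keeps its original injection $q_{v}^{*}$. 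The scheduling guarantees that the data each block needs is available when it is processed, so the procedure is well-defined and finishes after exactly $\#\mathcal{B}$ nonlinear solves.

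Next I would check that each such local system is a legitimate, well-posed instance of \eqref{eq:NF-pi}. Each generalized block is a connected subgraph of $G$; it carries at least one slack vertex (the genuine slacks for $B^{*}$, the parent replica otherwise); and since every cycle of $B$ and every slack-to-slack path of $B$ is also one of $G$, Assumptions~\ref{assumption:slacks}, \ref{assumption:path-pipe}, and \ref{assumption:cycle-pipe} are inherited and Proposition~\ref{prop:uniqueness} applies, so a solution of the local system, if one exists, is unique. A short induction on the tree distance from $B^{*}$ then shows that, for each block, the restriction to $B$ of the (assumed) global solution on $G_{\mathcal{BC}}$ solves the local system: the boundary data imposed on $B$ — the parent-replica potential and the child-cut-point injections — agrees with the corresponding values of the global solution, the former by the inductive hypothesis applied to the parent block together with \eqref{eq:pot-equal}, the latter by uniqueness of the tree flows in Lemma~\ref{lemma:flow-solve}. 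Hence each local solve is solvable, and by uniqueness it returns exactly that restriction.

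Finally I would assemble the per-block fields, together with the replica-edge flows from Lemma~\ref{lemma:flow-solve}, into a single assignment on $G_{\mathcal{BC}}$ and verify that it satisfies every equation of \eqref{eq:NF-pi}: the edge relations \eqref{eq:NF-pi-edge} hold inside each block and, on the replica edges, reduce to \eqref{eq:pot-equal} which holds by construction; the nodal balance \eqref{eq:NF-pi-balance} at ordinary vertices and at child replicas is enforced inside the relevant local solve; and the balance at each genuine cut-point $c$ collapses, exactly as in the proof of Theorem~\ref{thm:equivalence-generalized-block-cut-system}, to $\sum_{i}f_{cc^{i}}=q_{c}^{*}$, which is the flow-conservation identity built into Lemma~\ref{lemma:flow-solve}. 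Thus the assembled field solves \eqref{eq:NF-pi} on $G_{\mathcal{BC}}$, and by Theorem~\ref{thm:equivalence-generalized-block-cut-system} it yields the solution of \eqref{eq:NF-pi} on $G$. The step I expect to be the main obstacle is precisely this consistency verification: the flow balance at the parent-side replicas and at the shared cut-points is \emph{not} directly imposed by any single block's local solve and must be recovered from global conservation, so the bookkeeping of which equation is discharged where has to be kept exact — including the corner case in which a slack vertex of $G$ is itself a cut-point, which forces a minor rearrangement of the traversal.
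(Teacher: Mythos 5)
Your proposal follows essentially the same route as the paper's proof: root the generalized block-cut tree at $B^{*}$, use Lemma~\ref{lemma:flow-solve} to fix the inter-block (replica-edge) flows, solve the slack block first, and propagate cut-point potentials via \eqref{eq:pot-equal} as slack boundary data for the successive levels, with Theorem~\ref{thm:equivalence-generalized-block-cut-system} and Proposition~\ref{prop:uniqueness} doing the same work in both arguments. Your additional induction showing that each local system is solved by the restriction of the assumed global solution (and the final reassembly check) is a more careful justification of a point the paper simply assumes, namely solvability of each block-level system, but it is the same decomposition and the same key lemmas.
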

\begin{proof}
We have already shown (Theorem~\ref{thm:equivalence-generalized-block-cut-system}) that  the solution of the system for $G_{\mathcal{BC}}$ yields the solution corresponding to $G$. This result goes further and decomposes the solution of the large non-linear system for $G_{\mathcal{BC}}$ into smaller non-linear systems, one for each $B \in \mathcal{B}$. The order, however is hierarchical, and  is the crucial ingredient in the proof.

To start, the flow in each edge of the generalized block-cut tree $\operatorname{bc}(G; \mathcal{B}, \mathcal{C})$ is determined as shown  by Lemma~\ref{lemma:flow-solve}. These flows will act as injection boundary conditions for the sub-networks.

As the  slack nodes belong to the block $B^* \in \mathcal{B}$, we term it the \emph{first-level sub-network} 
$$\mathcal{B}^{(1)} = \{B^*\}.$$
For simplicity, we assume that no slack nodes belong to $\mathcal{C}$, so that $\mathcal{B}^{(1)}$ is singleton. The known flows along the edges involving $B^*$ act as injections and the non-linear system \eqref{eq:NF-pi} corresponding to \emph{first-level sub-network} $B^* \in \mathcal{B}^{(1)} $ may now be solved since we assumed a solution exists and by Proposition~\ref{prop:uniqueness} the solution must be unique.

By \eqref{eq:pot-equal}, the computed solution determines the nodal potential for 
\edit{$$\mathcal{C}^{(1)} = \bigcup_{B\in \mathcal{B}^{(1)}}\left(\mathcal{C} \cap V(B)\right),$$} and  sets a slack boundary condition \eqref{eq:NF-pi-slack} for  \emph{second-level sub-networks} 
\edit{$$\mathcal{B}^{(2)} = \left\{B \in \mathcal{B}\setminus \mathcal{B}^{(1)} \; | \; V(B) \cap \mathcal{C}^{(1)} \neq \emptyset \right\}.$$ }

In general, for a positive integer $m\geqslant 1$, if   $\mathcal{B}^{(m)} \neq \emptyset$,  then solution of  the nonlinear system \eqref{eq:NF-pi} for each element of $\mathcal{B}^{(m)}$ determines the nodal potential
\edit{$$\mathcal{C}^{(m)} = \bigcup_{B\in \mathcal{B}^{(m)}}(\mathcal{C} \cap V(B)),$$}
and sets a slack boundary condition \eqref{eq:NF-pi-slack} for  the \emph{$(m+1)$\textsuperscript{th}-level sub-network} defined through
\edit{$$\mathcal{B}^{(m+1)} = \left\{B \in \mathcal{B}\setminus \bigcup_{k \leqslant m}\mathcal{B}^{(k)} \;| \; V(B) \cap \mathcal{C}^{(m)} \neq \emptyset \right\}.$$}

If  $\mathcal{B}^{(m+1)} \neq \emptyset$, the nonlinear system \eqref{eq:NF-pi} is solved for each element of these 
\emph{$(m+1)$\textsuperscript{th}-level sub-networks}. 

For some $m=n$, we will have $\bigcup_{k \leqslant m}\mathcal{B}^{(k)} = \mathcal{B}$, implying that $\mathcal{B}^{(m+1)} = \emptyset$ and so this iterative procedure will terminate. This indicates that  the solution of the nonlinear system \eqref{eq:NF-pi} for $G_{\mathcal{BC}}$ has been obtained by hierarchical traversal of the generalized blocks  in $\mathcal{B}$  and the solution of nonlinear systems associated with these blocks. 
\end{proof}
\begin{remark}
When all slack nodes are in $B^*$, it is possible that there are other blocks containing  a  single slack node. In that case,  $\mathcal{B}^{(1)}$ will not be a singleton but will also  contain  these blocks.  Otherwise, the proof of Theorem~\ref{thm:hierarchical-nonlin-soln} remains unchanged. 
\end{remark}
\begin{remark}
\label{remark:2-node}
The smallest  block consists of two nodes connected by an edge, and hence has three variables (two nodal potentials and one edge flow). 
For such a block, it is trivial to solve the system \eqref{eq:NF-pi} by direct substitution and it illustrates a desirable feature of the algorithm described in Theorem~\ref{thm:hierarchical-nonlin-soln}.
\end{remark}
\section{Results}
\label{sec:results}
The hierarchical algorithm in the previous section was proposed under the assumption that each system of the type \eqref{eq:NF-pi} generated by the sub-networks  had a solution. 
If we restrict attention to gas flow in pipeline networks, then from \cite{ss-soln-existence}, we know that a unique solution to systems of the form \eqref{eq:NF-pi} does exist and hence one can test the algorithm in this context. 

For a few benchmark networks \cite{Schmidt2017}, \cite{birchfield2024structural} the block-cut tree representation was constructed and some key statistics related  to it are reported in Table~\ref{table:stats} \edit{with the aid of the python's NetworkX graph library \cite{networkx}.}
Each network name indicates the size measured through the number of nodes. 
The number of blocks represents the number of sub-networks over which the nonlinear system \eqref{eq:NF-pi} needs to be solved.
The third column indicates the number of minimal blocks in the partition. As these have only two nodes, they can be solved by direct substitution as indicated in Remark~\ref{remark:2-node}, meaning that one does not have to solve these iteratively. 
Thus, for these partitions, iterative methods are not required at all for GasLib-134 (since it is a tree), while GasLib-582 has 11 sub-networks that require some iterative method. 
The last  column reports the size of the biggest sub-network in this partitioned representation of the full nonlinear system and for these cases, this is at most a third of the size of the full network. 
Traversing through the partition as described in Theorem~\ref{thm:hierarchical-nonlin-soln}, a solution to \eqref{eq:NF-pi} was found successfully for each network listed in the table. Also, Fig. \ref{fig:texas-blocks} shows the Texas-2451 network with the sub-networks that have more than two nodes colored.

\begin{table}
\centering
\caption{Key statistics of the block-cut tree representation for some benchmark networks. Note that the 2-node blocks can be solved directly as explained in Remark~\ref{remark:2-node} and the biggest block in each case listed represents the largest sub-network over which the system \eqref{eq:NF-pi} needs to be solved.}
\begin{tabular}{cccc}
\toprule 
\multicolumn{1}{l}{Network}  & \multicolumn{1}{l}{No. of blocks} & \multicolumn{1}{l}{2-node Blocks} & \multicolumn{1}{l}{Max Block size} \\
\midrule
GasLib-40 & 25 &  21 & 11 (27.5\%)  \\
GasLib-134 & 133 & 133 & ----- \\
GasLib-582 & 370 & 359 & 113 (19\%)  \\
Texas-2451 & 1476 & 1470 & 843 (34\%) \\
\bottomrule
\end{tabular}
\label{table:stats}
\end{table}

\begin{figure}
    \centering
    \includegraphics[scale=1.5]{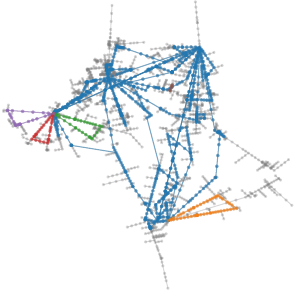}
    \caption{The Texas-2451 network is shown with the blocks that have more than two nodes colored.}
    \label{fig:texas-blocks}
\end{figure}

\section{Conclusion}
Potential-driven, steady-state flow in networks results in a  system of nonlinear equations that depends on the network topology and there is no numerical algorithm that offers guaranteed convergence to the solution in general.
For very large networks where solution of the full nonlinear system could be computationally prohibitive,  a solution scheme was presented that partitions the network into  a generalized
block-cut tree and solves the nonlinear system one block at a time and thus circumvents the computational issues caused by large networks. 
While this work considered the simulation of steady-state flow, it is not clear whether such partitioning confers any advantage in case of transient flow as well. 
Future work could also examine the utility of such partitioning in the context of optimization.

\printbibliography
\vskip 0pt plus -1fil

\end{document}